\newenvironment{dmath*}{\begin{displaymath}}{\end{displaymath}}
\newtheorem{theorem}{Theorem}
\newcommand{\cV}{\mathcal V}
\newcommand{\PG}{\mathrm{PG}\,}
\newcommand{\FF}{{\mathbb F}}
\newtheorem{proposition}[theorem]{Proposition}
\def\eqn#1$$#2$${\begin{equation}\label#1#2\end{equation}}
\begin{document}

\author{S.Capparelli and V.Pepe\thanks{The authors acknowledge the support of the project ``Decomposizione, propriet\`{a} estremali di grafi e combinatoria di polinomi ortogonali" of the SBAI Department of Sapienza University of Rome.}}
\title{A non-existence result on symplectic semifield spreads}
\maketitle

\begin{abstract}
We prove that there do not exist non-Desarguesian symplectic semifield spreads of PG$(5,q^2)$, $q\geq 2^{14}$ even, whose associated semifield has center containing $\FF_q$, by proving that the only $\FF_q$-linear set of rank 6 disjoint from the secant variety of the quadric Veronese variety of PG$(5,q^2)$ is a plane with three points of the Veronese surface of PG$(5,q^6)\setminus$PG$(5,q^2)$.
\end{abstract}

\section{Introduction}
Let PG$(n-1,q)$ be the projective space of dimension $n-1$ over the finite field $\FF_q$ of order $q$. Let $M(n,q)$ be the set of $n\times n$ matrices over $\FF_q$.

A {\it planar spread} $\mathcal{S}$ of PG$(2n-1,q)$, which we will call simply {\it spread} from now on, is a partition of the point-set in $(n-1)$-dimensional subspaces. With any spread $\mathcal{S}$ it is associated a translation plane $A(\mathcal{S})$ of order $q^n$ in the following way: embed PG$(2n-1,q)$ in PG$(2n,q)$ as a hyperplane section, then the points of $A(\mathcal{S})$ are the points of PG$(2n,q)\setminus\mathrm{PG}(2n-1,q)$, the lines are the $n$-dimensional subspaces of PG$(2n,q)$ intersecting PG$(2n-1,q)$ in an element of $\mathcal{S}$ and the incidence is containment (see e.g. \cite[Section 5.1]{demb}). Translation planes associated with different spreads are isomorphic if and only if there is a collineation of PG$(2n-1,q)$ mapping one spread to the other (see \cite{andre} or \cite[Chapter 1]{luneburg}). Without loss of generality, we may always assume that $S(\infty):=\{(\mathbf{0},\mathbf{y}), \mathbf{y}\in \FF_q^n\}$ and $S(0):=\{(\mathbf{x},\mathbf{0}), \mathbf{x}\in \FF_q^n\}$ belong to $\mathcal{S}$, hence we may write $\mathcal{S}=\{S(A), A \in \mathbb{C}\}\cup S(\infty)$, with $S(A):=\{(\mathbf{x},\mathbf{x}A),\mathbf{x}\in \FF_q^n \}$ and $\mathbb{C} \subset M(n,\FF_q)$ such that $|\mathbb{C}|=q^n$, $\mathbb{C}$ contains the zero matrix and  $A-B$ is non-singular for every $A,B \in \mathbb{C}$. The set $\mathbb{C}$ is called the spread set associated with $\mathcal{S}$.

A spread $\mathcal{S}$ is said to be \textit{Desarguesian} if $A(\mathcal{S})$ is isomorphic to AG$(2,q^n)$ and hence a plane coordinatized by the field of order $q^n$. The spread $\mathcal{S}$ is said to be a \textit{semifield spread} if $A(\mathcal{S})$ is a plane of Lenz-Barlotti class V and this is equivalent to saying that $A(\mathcal{S})$ is coordinatized by a semifield. A finite {\it semifield} $(F,+,*)$ is a finite nonassociative division algebra. If $(F,+,*)$ satisfies all the axioms for a semifield except, possibly, the existence of an identity element for the multiplication, then it is called a {\it presemifield}. A semifield spread $\mathcal{S}$ is such that there exists an elementary abelian subgroup $G$ of $PGL(2n,q)$ of order $q^n$ fixing an element $X \in \mathcal{S}$ point-wise and acting regularly on $\mathcal{S}\setminus X$.
If we set $X=S(\infty)$, then $\mathbb{C}$ turns out to be closed under addition; hence $\mathbb{C}$ is a set of $q^n$ $n\times n$ invertible matrices over $\FF_q$ that form a subgroup of the additive group of $M(n,\FF_q)$ (\cite[Section 5.1]{demb}). Then $\mathbb{C}$ is a vector space over some subfield of $\FF_q$. This has led to the following geometric interpretation (see \cite{lunardon2003} as it first appeared for $n=2$ and \cite{lavrauw2011} for the general case). Let PG$(n^2-1,q)$ be the projective space induced by $M(n,\FF_q)$ and let $\mathcal{D}$ be the algebraic variety of PG$(n^2-1,q)$ consisting of the singular matrices; $\mathcal{D}$ is a so-called determinantal variety and it is the $(n-2)$-th secant variety of the Segre variety $\Sigma_{n-1,n-1}:=\mathrm{PG}(n-1,q)\times\mathrm{PG}(n-1,q)$ (see \cite[Ch.9]{Harris}). If $\mathbb{C}$ is an $\FF_s$-vector space, $q=s^t$, then $\dim_{\FF_s}\mathbb{C}= nt$ and it defines a subset $\Lambda$ of PG$(n^2-1,q)$ called $\FF_s$-linear set of rank $nt$ (for a beautiful and complete overview of the topic see \cite{linear_set}). So finding a semifield spread of PG$(2n-1,q)$ (and hence a semifield plane of order $q^n$) is equivalent to finding an $\FF_s$-linear set of PG$(n^2-1,q)$, $q=s^t$, of rank $nt$ disjoint from $\mathcal{D}$. We recall that the left nucleus $N_l$ of a semifield $F$ is the set $\{k \in F| k*(x*y)=(k*x)*y, \forall x,y \in F\}$ and the center is the set $ \mathcal{K}=\{k \in F| k*(x*y)=(k*x)*y, x*(k*y)=(k*x)*y, x*(y*k)=(x*y)*k\quad \forall x,y \in F\}$, so an $\FF_s$-linear set of PG$(n^2-1,q)$ of rank $nt$ disjoint from $\mathcal{D}$, with $\FF_s$ maximum subfield of linearity leads to a semifield of order $q^n$, center isomorphic to $\FF_s$ and left nucleus isomorphic to $\FF_q$. In \cite{lavrauw2011}, the following more general result has been proved: two semifields of order $q^n$, with left nucleus containing $\FF_q$ and center containing $\FF_s$, are isotopic (that is they coordinatize isomorphic translation planes) if and only if there is collineation fixing the two systems of maximal subspaces of $\Sigma_{n-1,n-1} \subset \mathrm{PG}(n^2-1,q)$ mapping one $\mathbb{F}_s$-linear set to the other.

The spread $\mathcal{S}$ is said to be \textit{symplectic} if the elements of $\mathcal{S}$ are totally isotropic with respect a \textit{symplectic polarity} of PG$(2n-1,q)$.
Let $\beta((\mathbf{x}_1,\mathbf{y}_1),(\mathbf{x}_2\mathbf{y}_2))=\mathbf{x}_1\mathbf{y}_2^T-\mathbf{y}_1\mathbf{x}_2^T$, $\mathbf{x}_i,\mathbf{y}_i \in \FF_q^n$, then $\beta$ is an alternating bilinear form of PG$(2n-1,q)$ and it induces a symplectic polarity $\perp$. The subspace $S(\infty)$ is clearly totally isotropic with respect to $\perp$. The subspace $S(A)\in \mathcal{S}$ is totally isotropic if and only if $\mathbf{x}_1(\mathbf{x}_2A)^T-(\mathbf{x}_1A)\mathbf{x}_2^T=0 \quad \forall \mathbf{x}_1,\mathbf{x}_2 \in \FF_q^n$ and this is true if and only if $A$ is symmetric. Hence to a symplectic spread $\mathcal{S}$ it is possible to associate a spread set $\mathbb{C}$ consisting of symmetric matrices (see e.g. \cite{kantor2003,maschietti2003}). The symmetric matrices form a subspace of $M(n,\FF_q)$ of dimension $\frac{n(n+1)}{2}$ that then induces a subspace PG$(\frac{n(n+1)}{2}-1,q)$ of $PG(n^2-1,q)$. The rank 1-symmetric matrices are the Veronese variety $\cV$ of degree 2 of PG$(\frac{n(n+1)}{2}-1,q)$ (this is the so called determinantal representation of the Veronese variety of degree 2, see \cite[Example 2.6]{Harris}). Hence the singular symmetric matrices form the $(n-2)$-th secant variety, say $\cV_{n-2}$ of the Veronese variety. So to a symplectic semifield spread of PG$(2n-1,q)$ there corresponds an $\FF_s$-linear set $\Lambda$, $q=s^t$, of PG$(\frac{n(n+1)}{2}-1,q)$ of rank $tn$ such that $\Lambda \cap \cV_{n-2}=\emptyset$ (see also \cite{lmpt2011}).

For odd $q$, there are many examples of non-Desarguesian symplectic semifield spreads of  PG$(2n-1,q)$, also due to the connection between DO polynomials and commutative semifields of odd order (by \cite{kantor2003}, any symplectic semifield defines a presemifield isotopic to a commutative semifield).

When $q$ is even, a symplectic spread of PG$(2n-1,q)$ gives rise to many interesting geometric structures.
For even $q$, the symplectic polar space of PG$(2n-1,q)$ is isomorphic to the orthogonal space of PG$(2n,q)$ induced by a non-singular parabolic quadric $\mathcal{Q}(2n,q)$ as, for even $q$, $\mathcal{Q}(2n,q)$ has a nucleus $N$ through which all the tangent lines pass: projecting the points of $\mathcal{Q}(2n,q)$ from $N$ to a hyperplane, we obtain an incidence structure that is a symplectic polar space. Hence a symplectic spread $\mathcal{S}$ of PG$(2n-1,q)$ is equivalent to a spread of $\mathcal{Q}(2n,q)$, i.e., a partition of the point-set of $\mathcal{Q}(2n,q)$ in subspaces of maximum dimension $n-1$. If $n$ is odd, a spread of $\mathcal{Q}(2n,q)$ in turn leads to a spread of a hyperbolic quadric $\mathcal{Q}^+(2n+1,q)$: embed $\mathcal{Q}(2n,q)$ in $\mathcal{Q}^+(2n+1,q)$ as a hyperplane section, then a maximal subspace of $\mathcal{Q}(2n,q)$ is contained in two maximal subspaces of $\mathcal{Q}^+(2n+1,q)$, one for each system, so if we pick one, we obtain a spread of $\mathcal{Q}^+(2n+1,q)$, i.e., a partition of the point-set of $\mathcal{Q}^+(2n+1,q)$ in subspaces of dimension $n$. By a suitable choice of coordinates, every such a spread gives rise to a Kerdock set, i.e., a set of $q^n$ skew-symmetric $(n+1)\times (n+1)$ matrices, $n$ odd, such that the difference of any two of them is nonsingular. From such a Kerdock set it is possible to obtain a remarkable class of codes: namely, binary Kerdock codes of length $2^{n+1}$ (see \cite{ccks1997}). By slicing a spread of $\mathcal{Q}^+(2n+1,q)$ in nonequivalent hyperplanes under the action of the subgroup of the orthogonal group fixing it, we obtain nonequivalent spreads of $\mathcal{Q}(2n,q)$, called \textit{cousins} in \cite{kantorI}, and hence, for even $q$, nonequivalent symplectic spreads and translation planes. Hence starting from one symplectic spread it is possible to get many more (see \cite{kantorI}).

In the particular case when $n=3$, we obtain a spread of $\mathcal{Q}^+(7,q)$ and hence, by triality, we obtain an ovoid of $\mathcal{Q}^+(7,q)$, a combinatoric structure that gives rise to many others (see \cite{kantorI,kantor1982}).

In this article, we are focused on symplectic semifield spreads of PG$(5,q)$, when $q$ is even. In such a case, only two nonsporadic examples are known: the Desarguesian spread and one of its cousin (see \cite{kantorI}), so they are both obtained by slicing the so called Desarguesian spread of
$\mathcal{Q}^+(7,q)$. In the first case, the associated translation plane is the Desarguesian plane, hence it is coordinatized by the finite field of order $q^3$ and the relevant linear
set is actually linear on $\FF_q$. In the second case we have, somehow, the ``opposite situation'': the semifield spread is associated to a spread set $\mathbb{C}$ that gives rise to an
$\mathbb{F}_2$-linear set $\Lambda$ of PG$(5,q)$, where $\FF_2$ is the maximum subfield of $\FF_q$ for which $\Lambda$ is linear, hence the associate semifield has order $q^3$ and center $\FF_2$. In this article we prove the following:

 \textbf{Main result.}
 \textit{The only $\FF_q$-linear set $\Lambda$ of rank $6$ disjoint from the secant variety of the Veronese surface of PG$(5,q^2)$, for even $q\geq 2^{14}$, is a plane with three  points of the Veronese surface of PG$(5,q^6)\setminus$PG$(5,q^2)$, and hence there do not exist non-Desarguesian semifield symplectic spreads of PG$(5,q^2)$, whose associated semifield has center containing $\FF_q$}.

\section{Quadric Veronesean and its secant variety}
In this section we denote by $\mathbb{P}^{n-1}$ the $(n-1)$-dimensional projective space over the generic field $\mathbb{F}$.

The Veronese map of degree $2$
 $$v_2:(x_0,x_1,\ldots,x_{n-1}) \in \mathbb{P}^n \longmapsto (\ldots, \mathbf{x}^{l},\ldots)\in \mathbb{P}^{N-1}$$ is such that $\mathbf{x}^l$ ranges over all
 monomials of degree $2$ in $x_0,x_1,\ldots,x_{n-1}$, hence $N=\frac{n(n+1)}{2}$. The image of $v_2$ is an algebraic variety called quadric Veronese variety. If we use the so-called
  determinantal representation of the Veronese variety of degree 2 (see \cite[Example 2.6]{Harris}), then $\mathbb{P}^{N-1}$ is induced by the subspace of $M(n,\mathbb{F})$ consisting of
   symmetric matrices and, by $v_2(x_0,x_1,\ldots, x_{n-1})=A$ such that $a_{ij}=x_ix_j$, the Veronese variety turns out to be the intersection of $\Sigma_{n-1,n-1}=\mathbb{P}^{n-1} \times \mathbb{P}^{n-1}$ with such a $\mathbb{P}^{N-1}$.
   We recall that the Segre variety $\Sigma_{n-1,n-1}\subset \mathbb{P}^{n^2-1}$ consists of all rank 1 matrices of $M(n,\mathbb{F})$ and the $k$th secant variety of $\Sigma_{n-1,n-1}$,
    i.e., the union of $k$-subspaces spanned by points of $\Sigma_{n-1,n-1}$, is the algebraic variety consisting of the matrices of rank at most $k+1$ (see \cite[Example 9.2]{Harris}),
    hence  the $k$th secant variety of the Veronese variety consists of the symmetric matrices of rank at most $k+1$.

 We are interested in the particular case when $n=3$ and hence the quadric Veronese surface is $\cV=v_2(\mathbb{P}^2)=\left\{\left(
                                                                                               \begin{array}{ccc}
                                                                                                 x^2 & xy & xz \\
                                                                                                 xy & y^2 & yz \\
                                                                                                  xz & yz & z^2 \\
                                                                                               \end{array}
                                                                                             \right),x,y,z \in \mathbb{F}\right\}
  \subset \mathbb{P}^5$. The image of a line $\ell$ of $\mathbb{P}^2$ is a conic, intersection of $\cV$ with a suitable plane. If the characteristic of $\mathbb{F}$ is even, then a line of
   the plane is tangent to a conic if and only if it contains a fixed point, called the nucleus of the conic. Then, when the field characteristic is even, there exists a plane $\pi_{N}$ of
   $\mathbb{P}^5$, called the nucleus plane, such that the nucleus of each conic $v_2(\ell)$ belongs to $\pi_{N}$ and vice versa each point of $\pi_{N}$ is a nucleus of one and only one conic
   $v_2(\ell)$ (see \cite[Th.25.1.17]{GGG} for this particular case and \cite{thas_vanmaldeghem2004} for the general case). In the representation we have chosen, we have

  $$\pi_N=\left\{\left(
   \begin{array}{ccc}
    0 & t & u \\
    t & 0 & v \\
    u & v & 0 \\
    \end{array}
    \right)
   t,u,v \in \mathbb{F}\right\}.$$

    The automorphism group $\hat{G}$ of $\cV$ is the lifting of $G=PGL(3,\mathbb{F})$ acting in the obvious way: $v_2(P)^{\hat{g}}=v_2(P^g)$, $\forall g \in PGL(3,\mathbb{F})$.

 The secant variety $\cV_1$ of $\cV$ is the variety of $\mathbb{P}^5$ consisting of the points lying on a line secant to $\cV$. It is a determinantal variety, i.e., it consists of the symmetric $3 \times 3$ matrices $\left\{\left(
                                                                                           \begin{array}{ccc}
                                                                                             t_0 & t_3 & t_4 \\
                                                                                             t_3 & t_1 & t_5 \\
                                                                                             t_4 & t_5 & t_2 \\
                                                                                           \end{array}
                                                                                         \right),t_i \in \mathbb{F},i=0,1,\ldots,5\right\}
 $ with zero determinant. So $\cV_1$ is a hypersurface of $\mathbb{P}^5$ with equation $t_0t_1t_2-t_0t_5^2-t_1t_4^2-t_2t_3^2+2t_3t_4t_5=0$. The automorphism group $\hat{G}$ of $\cV$ obviously fixes $\cV_1$. It is well known that the singular points of $\cV_1$ are the points of $\cV$ (see, e.g., \cite[Excercise 14.15]{Harris}), that are actually double points. Moreover, it is easy to see that, when the field characteristic is even, every tangent hyperplane contains $\pi_N$ and viceversa. We observe that $\pi_N\subset \cV_1$.

\begin{proposition}\label{planes}
 Let $\cV_1$ be the secant variety of the quadric Veronese variety $\cV=v_2(\mathbb{P}^2)\subset \mathbb{P}^5$. The maximal linear subspaces contained in $\cV_1$ are planes and they are: the planes intersecting $v_2(\mathbb{P}^2)$ in $v_2(\ell)$, with $\ell$ a line of $\mathbb{P}^2$, the planes tangent to the Veronese variety and, if the field characteristic is even, the nucleus plane.
\end{proposition}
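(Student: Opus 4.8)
The plan is to attach to each linear subspace $L=\mathbb{P}(U)\subseteq\cV_1$ the rational map $\kappa$ sending a symmetric matrix $A$ of rank $2$ to $\ker A\in\mathbb{P}^2$; this is well defined on the rank-$2$ locus because there $\mathrm{adj}(A)$ is a nonzero rank-$1$ matrix, a scalar multiple of $ww^{T}$ with $\langle w\rangle=\ker A$. One may assume $\mathbb{F}$ algebraically closed, since the three families in the statement are defined over the prime field and each plane determines its parameter (a tangent plane $T_P\cV$ determines $P$ as the unique point of $\cV$ lying on it; a plane meeting $\cV$ in $v_2(\ell)$ determines $\ell$), so the classification descends. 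Since $\cV$ contains no line, for $\dim L\geq1$ the rank-$2$ locus $L^{\circ}:=L\setminus\cV$ is dense and irreducible in $L$; set $S:=\overline{\kappa(L^{\circ})}\subseteq\mathbb{P}^2$. Two facts will drive everything. First, $\cV_1$ is smooth exactly along $\cV$, and by Jacobi's formula its embedded tangent hyperplane at a rank-$2$ point $A$ is $H_w:=\{X: w^{T}Xw=0\}$ with $\langle w\rangle=\ker A$; since $L$ is linear and contained in $\cV_1$, this gives $L\subseteq H_w$ for every $A\in L^{\circ}$, hence $L\subseteq\bigcap_{\langle w\rangle\in S}H_w$. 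Second, the fibre of $\kappa|_{L^{\circ}}$ over $\langle w\rangle$ is dense in $L\cap\mathcal{P}_w$, where $\mathcal{P}_w:=\mathbb{P}(\{X\ \text{symmetric}: Xw=0\})$ is a plane and the planes $\mathcal{P}_w$ are precisely the planes meeting $\cV$ in a conic $v_2(\ell)$; by the theorem on fibre dimensions, $\dim L=\dim S+\dim(\text{general fibre})$.

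After checking that the three families lie in $\cV_1$ --- elements of $\mathcal{P}_w$ are singular by definition; $T_P\cV$ with $P=\langle p\rangle$ consists of the matrices $pv^{T}+vp^{T}$, which have rank $\leq2$; a symmetric $3\times3$ matrix with zero diagonal has determinant $2tuv$, which vanishes in even characteristic, so $\pi_N\subseteq\cV_1$ --- I would run the trichotomy on $\dim S\in\{0,1,2\}$. If $\dim S=0$ there is a common kernel vector $w$, so $L\subseteq\mathcal{P}_w$; hence $\dim L\leq2$, with $L=\mathcal{P}_w$ when $\dim L=2$. If $\dim S=1$ and $\dim L=3$, the general fibre is a plane, so $\mathcal{P}_w\subseteq L$ for at least two distinct $\langle w\rangle$; but two distinct planes of a $\mathbb{P}^3$ meet along a line, whereas $\mathcal{P}_w\cap\mathcal{P}_{w'}=\mathbb{P}(\{X: Xw=Xw'=0\})$ is a single point for $\langle w\rangle\neq\langle w'\rangle$, a contradiction, so $\dim L=2$; then $\dim\bigcap_{\langle w\rangle\in S}H_w\geq2$ forces (in odd characteristic) $S$ to be a line, since otherwise $v_2(S)$ spans at least a $\mathbb{P}^4$, and after normalising $S$ by $\mathrm{PGL}(3,\mathbb{F})$ one computes that $\bigcap_{\langle w\rangle\in S}H_w$ equals $L$ and is the tangent plane along that line. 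If $\dim S=2$ then $S=\mathbb{P}^2$, and $\bigcap_{\langle w\rangle\in\mathbb{P}^2}H_w$ is empty in odd characteristic --- so this case cannot occur --- and equals $\pi_N$ in even characteristic, whence $\dim L\leq2$ and $L=\pi_N$. Assembling the cases gives $\dim L\leq2$ throughout, so the maximal linear subspaces are planes; the same analysis applied with $\dim L\leq1$ shows that every line or point of $\cV_1$ is contained in a plane of one of the three families, which are therefore exactly the maximal linear subspaces.

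The delicate part is characteristic $2$. There the linear form $X\mapsto w^{T}Xw$ reads off only the diagonal, namely $w_0^{2}X_{00}+w_1^{2}X_{11}+w_2^{2}X_{22}$, so the trace pairing on symmetric $3\times3$ matrices degenerates (its radical is the space underlying $\pi_N$); consequently $\bigcap_{\langle w\rangle\in S}H_w$ is strictly larger than in the odd case and no longer pins down $L$ by dimension alone. For $S$ a line this intersection is a $\mathbb{P}^3$, and one must intersect it again with $\cV_1$: on that $\mathbb{P}^3$ the determinant restricts, in suitable coordinates, to $X_{01}^{2}X_{22}$, so $\cV_1$ cuts it in the union of the tangent plane $\{X_{00}=X_{11}=X_{01}=0\}$ and the nucleus plane $\{X_{00}=X_{11}=X_{22}=0\}$, and irreducibility of $L$ forces $L$ to be one of these two. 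One must also keep the characteristic-$2$ bookkeeping consistent --- e.g. that $\pi_N$ itself has $\dim S=2$, so that it genuinely arises only in the last case --- and handle the Frobenius in $w\mapsto(w_i^{2})$, which still forces $S$ to be a line when the relevant span is small, using that over $\overline{\mathbb{F}}$ one may extract square roots.
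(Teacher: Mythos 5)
Your argument is correct, but it takes a genuinely different route from the paper. The paper does not classify from scratch: it quotes the known description (Harris, p.~113) of the maximal linear subspaces on the full determinantal variety of singular $3\times 3$ matrices --- common kernel, common image, $A(V)\subset W$, and the zero-diagonal skew-symmetric matrices --- and then simply intersects each of the four families with the space of symmetric matrices, observing that the first two coincide and yield the conic planes, the third yields the tangent planes, and the fourth survives only in even characteristic and gives $\pi_N$. Your proof instead builds the classification directly from the kernel map $\kappa$ on the rank-$2$ locus, the identification of its fibres with the conic planes $\mathcal{P}_w$, the tangent-hyperplane condition $L\subseteq H_w$ via Jacobi's formula, and the trichotomy on $\dim S$. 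What the paper's route buys is brevity, at the cost of importing a classification usually stated over an algebraically closed field and of leaving the even-characteristic degeneration implicit in the cited source; what yours buys is self-containedness and, in particular, an explicit treatment of exactly where characteristic $2$ changes the picture (the degeneration of $X\mapsto w^{T}Xw$ to the diagonal, the radical $\pi_N$ of the trace pairing, and the splitting of $\bigcap_{w\in S}H_w\cap\cV_1$ into tangent plane plus nucleus plane when $S$ is a line). The case analysis is complete: $\dim S=0$ traps $L$ in a conic plane, $\dim S=2$ traps it in $\pi_N$ or is vacuous, and the fibre-dimension count together with $\mathcal{P}_w\cap\mathcal{P}_{w'}$ being a single point correctly kills $\dim L=3$ when $\dim S=1$.

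Two small slips to fix, neither of which affects the argument. First, you write that ``$\cV_1$ is smooth exactly along $\cV$''; you mean singular exactly along $\cV$ (what you actually use, and prove, is that rank-$2$ points are smooth points, since the differential $X\mapsto w^{T}Xw$ is a nonzero linear form on the symmetric matrices in every characteristic). Second, in the preliminary check that tangent planes lie in $\cV_1$ you describe $T_P\cV$ as $\{pv^{T}+vp^{T}\}$; in characteristic $2$ that set has zero diagonal and is only a line (contained in $\pi_N$). The tangent plane is the span of $v_2(P)=pp^{T}$ together with $\{pv^{T}+vp^{T}\}$, i.e.\ (for $P=\langle e_0\rangle$) the plane $\{X_{11}=X_{22}=X_{12}=0\}$, which is the object your $\bigcap_{w\in S}H_w$ computation actually produces and which does consist of matrices of rank at most $2$ in every characteristic.
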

\begin{proof}
 The maximal linear subspaces contained in the determinantal variety of the $3\times 3$ singular matrices are of four types: the matrices with a common kernel, the matrices with a common image, the matrices $A$ such that $A(V)\subset W$, with $V$ and $W$ subspaces of $\mathbb{F}^3$ of dimension 2 and 1 respectively (here by dimension we mean vector space dimension), and the skew-symmetric matrices (with zero diagonal) (see \cite[page 113]{Harris}). The variety $\cV_1$ is the variety of the symmetric $3\times 3$ singular matrices and it is easy to see that we can get subspaces of projective dimension at most 2. Moreover, the matrices of the first two types coincide. Let $L$ be the linear subspace of the symmetric singular matrices $A$ such that $(a,b,c)A=\mathbf{0}$ for some fixed non-zero vector $(a,b,c) \in \mathbb{F}^3$. Consider the line $\ell:=\{(x,y,z)\in \mathbb{P}^2|ax+by+cz=0\}$, then $v_2(\ell)$ is clearly contained in $L$. Let now $L$ be linear subspace of the symmetric singular matrices $A$ such that $A(V)\subset W$, with $V$ and $W$ subspaces of $\mathbb{F}^3$ of dimension 2 and 1 respectively. Let $w_1,w_2 \in \mathbb{F}^3$ two vectors defining $W$, i.e. $w_i\cdot w=0 \forall w \in W$, and let $v_1,v_2$ be two independent vectors of $V$, then we have $L=\{A \text{ singular and symmetric}|v_iA(w_j)^T=0 \forall i=1,2\}$. In order to have a linear space of dimension 3, we must have $\{v_1,v_2\}=\{w_1,w_2\}$, hence $L=\{A \text{ singular and symmetric}|v_iA(v_j)^T=0, i=1,2\}$. Let $v_i=(a_i,b_i,c_i),i=1,2$ and let $A \in \cV$, then $v_iA(v_j)^T=(a_ix+b_iy+c_iz)(a_jx+b_jy+c_jz)$. Hence $L\cap \cV$ is the unique point $(x,y,z) \in \mathbb{P}^2$ such that $(a_1x+b_1y+c_1z)=(a_2x+b_2y+c_2z)=0$ and it is the tangent plane to $\cV$ at $v_2(x,y,z)$. The skew-symmetric matrices (with zero diagonal) belong to $\cV_1$ if and only if the field characteristic is even and they form the nucleus plane $\pi_N$.
\end{proof}

In the following, we will refer to the planes containing $v_2(\ell)$, with $\ell$ a line of $\mathbb{P}^2$, as \textit{conic planes} and to the planes tangent to $\cV$ as \textit{tangent planes}.

 \begin{proposition}\cite[Theorem 25.1.17]{GGG}\label{intersection}
 Let the field characteristic be even. Each conic plane intersects the nucleus plane in a point and each tangent plane intersects the nucleus plane in a line.
\end{proposition}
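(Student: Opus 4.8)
The plan is to pass to the explicit matrix coordinates and, in each case, reduce the statement to computing the rank of a $3\times3$ homogeneous linear system over $\mathbb{F}$; the only delicate point will be checking that each intersection is no larger than claimed. Write a general point of the nucleus plane as $A(t,u,v)=\left(\begin{smallmatrix}0&t&u\\ t&0&v\\ u&v&0\end{smallmatrix}\right)$, and recall from the proof of Proposition~\ref{planes} that the conic plane attached to the line $\ell:ax+by+cz=0$ is $L_\ell=\{A\text{ symmetric}:(a,b,c)A=\mathbf 0\}$, while the tangent plane $T_P$ to $\cV$ at $v_2(P)$, $P=(p,q,r)$, is $\{A\text{ symmetric}:\ell_1A\ell_1^{T}=\ell_1A\ell_2^{T}=\ell_2A\ell_2^{T}=0\}$ for any basis $\ell_1,\ell_2$ of the pencil of linear forms vanishing at $P$.

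First I would handle the conic planes. Imposing $(a,b,c)A(t,u,v)=\mathbf 0$ yields the homogeneous system $bt+cu=0$, $at+cv=0$, $au+bv=0$ in $(t,u,v)$, with coefficient matrix $M=\left(\begin{smallmatrix}b&c&0\\ a&0&c\\ 0&a&b\end{smallmatrix}\right)$. One computes $\det M=-2abc$, which vanishes because the characteristic is two, so the system has a nontrivial solution and $L_\ell\cap\pi_N\neq\emptyset$. On the other hand $a^{2}$, $b^{2}$, $c^{2}$ all occur among the $2\times2$ minors of $M$, and since $(a,b,c)\neq(0,0,0)$ at least one of them is nonzero; hence $\rank M=2$, the solution space is one-dimensional, and $L_\ell\cap\pi_N$ is a single point — namely $A(c,b,a)$, which is then forced to be the nucleus of the conic $v_2(\ell)$.

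Next the tangent planes. Every $A=A(t,u,v)\in\pi_N$ represents an alternating bilinear form (zero diagonal, characteristic two), so $\ell_1A\ell_1^{T}=\ell_2A\ell_2^{T}=0$ automatically; thus $A\in T_P\cap\pi_N$ if and only if the single linear equation $\ell_1A\ell_2^{T}=0$ holds on $\pi_N$. This equation is not identically zero: otherwise $\pi_N\subseteq T_P$, whence $\pi_N=T_P$ as both are planes, contradicting $v_2(P)\in T_P\setminus\pi_N$. Therefore it cuts out a hyperplane of $\pi_N$, i.e. a line. Concretely, taking $r\neq0$ and $\ell_1=(r,0,p)$, $\ell_2=(0,r,q)$, the equation simplifies to $rt+qu+pv=0$, exhibiting $T_P\cap\pi_N$ as a line of $\pi_N$ directly.

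The arithmetic above is immediate; the only step where a slip could occur is the dimension bookkeeping — confirming in the first case that $\rank M=2$ (so the conic plane meets $\pi_N$ in \emph{only} a point, not a line) and in the second case that the residual equation $\ell_1A\ell_2^{T}=0$ is a \emph{genuine} constraint on $\pi_N$ (so the tangent plane meets $\pi_N$ in \emph{only} a line, not all of $\pi_N$). Both reduce to the structural facts that $\det M=-2abc=0$ while $a^{2},b^{2},c^{2}$ are minors of $M$, and that the forms of $\pi_N$ are alternating. One can also argue without coordinates: by the formula above the nucleus map $\ell\mapsto N_\ell$ is the projectivity $[a:b:c]\mapsto[c:b:a]$ from $\mathbb{P}^2$ onto $\pi_N$, so the pencil of lines through $P$ is carried onto a line $m_P\subset\pi_N$; since in characteristic two the tangent line to $v_2(\ell)$ at $v_2(P)$ is $\langle v_2(P),N_\ell\rangle$, one gets $T_P=\langle v_2(P),m_P\rangle$ and hence $T_P\cap\pi_N=m_P$, recovering both assertions at once.
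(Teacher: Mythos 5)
Your argument is correct, but note that the paper does not actually prove this proposition: it is stated with a bare citation to \cite[Theorem~25.1.17]{GGG}, so there is no internal proof to compare against. What you have produced is a self-contained verification, and it has the merit of being carried out in the determinantal coordinates the paper sets up in Proposition~\ref{planes} (conic plane $=\{A:(a,b,c)A=\mathbf 0\}$, tangent plane cut out by $v_iAv_j^T=0$), rather than in the classical monomial coordinates used in \cite{GGG}. The two computations that carry the whole statement are exactly the ones you isolate: for the conic plane, $\det M=-2abc$ vanishes precisely because the characteristic is two while the minors $a^2,b^2,c^2$ force $\operatorname{rank}M=2$, so the intersection is a single point and not a line; for the tangent plane, every matrix of $\pi_N$ is alternating, so two of the three defining conditions are vacuous on $\pi_N$ and the third, $\ell_1A\ell_2^T=0$, is nontrivial because $T_P\neq\pi_N$ (as $v_2(P)\in T_P\setminus\pi_N$), hence cuts a line. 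The explicit check $\ell_1A\ell_2^T=r(rt+qu+pv)$ with $r\neq 0$ confirms this. Your closing coordinate-free remark --- that the nucleus map $[a:b:c]\mapsto[c:b:a]$ is a projectivity carrying the pencil of lines through $P$ onto a line $m_P\subset\pi_N$, with $T_P=\langle v_2(P),m_P\rangle$ --- is essentially the argument of \cite{GGG} and is a clean way to see both claims simultaneously; either version would serve as a legitimate replacement for the citation.
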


 \begin{proposition}\label{empty}
Let $\cV_1$ be the secant variety of the Veronese surface of PG$(5,q)$, $q\geq 2^8$, then a plane $\pi$ is disjoint from $\cV_1$ if and only if $\pi \cap \cV_1$ consists of three lines through three points of the Veronese surface over $\FF_{q^3}$.
\end{proposition}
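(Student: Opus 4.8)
The plan is to pass to the plane cubic $C:=\pi\cap\cV_1$ and analyse it. If $\pi\subseteq\cV_1$ both sides of the equivalence fail ($\pi$ has $\FF_q$-rational points, and the plane $\pi$ is not a union of three lines), so we may assume $\pi\not\subseteq\cV_1$; then $C$ is a plane cubic curve defined over $\FF_q$, and ``$\pi$ disjoint from $\cV_1$'' means precisely $C(\FF_q)=\emptyset$.

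For the ``if'' direction, suppose $C$ is a triangle of three lines whose vertices $P_1,P_2,P_3$ are three points of $\cV$ lying in $\mathrm{PG}(5,q^3)\setminus\mathrm{PG}(5,q)$. Being a single Galois orbit they are distinct, and since $\cV$ has no three collinear points the sides $\overline{P_2P_3},\overline{P_1P_3},\overline{P_1P_2}$ are not concurrent. Frobenius permutes these three lines cyclically, so a rational point of $C$ would have to lie on all three --- impossible; hence $C(\FF_q)=\emptyset$.

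For the ``only if'' direction, assume $C(\FF_q)=\emptyset$, and first classify plane cubics over $\FF_q$ without rational points. If $C$ is geometrically irreducible then $\#C(\FF_q)\ge q+1-2\sqrt q>0$ by the Hasse--Weil bound (in the form valid also for singular curves) --- this is the only place $q\ge 2^8$ is used, very wastefully. Otherwise $C$ is a union of lines and conics, and unless it is a triangle of three distinct lines forming a single Galois orbit of size $3$ it contains a component that is a line or a conic defined over $\FF_q$ (by uniqueness of factorisation of the defining form, respectively because any other line-configuration has a Galois-fixed line), hence has an $\FF_q$-point. So $C=\ell_1\cup\ell_2\cup\ell_3$, the $\ell_i$ forming one Galois orbit of size $3$; they are not concurrent (a common point would be Frobenius-fixed, hence rational), so $C$ is a triangle whose vertices $P_1,P_2,P_3$ are cyclically permuted by Frobenius and therefore lie in $\mathrm{PG}(5,q^3)\setminus\mathrm{PG}(5,q)$.

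It remains to prove $P_1,P_2,P_3\in\cV$, which is the heart of the argument. Since $\{P_1,P_2,P_3\}$ and $\cV$ are Galois-stable, either all the $P_i$ lie on $\cV$ or none do; assume none do. Then each $P_i$ is a rank-$2$ symmetric matrix --- a smooth point of $\cV_1$ --- with kernel spanned by some $v_i$, and the two sides of the triangle through $P_i$ lie in $\cV_1$ and pass through the smooth point $P_i$, hence lie in $T_{P_i}\cV_1$; since these two lines span $\pi$, we get $\pi\subseteq T_{P_i}\cV_1$ for $i=1,2,3$. The gradient of $\det$ at a rank-$2$ symmetric matrix being its adjugate, which is the rank-one matrix $v_iv_i^T$, one has $T_{P_i}\cV_1=\{B\ \text{symmetric}:v_i^TBv_i=0\}$, and in even characteristic $v_i^TBv_i=\sum_k (v_i)_k^2B_{kk}$; in particular $\pi_N\subseteq T_{P_i}\cV_1$, and the three hyperplanes $T_{P_i}\cV_1$ are cut out by linear forms in the diagonal entries spanning a space of dimension $d:=\dim\langle v_1,v_2,v_3\rangle$. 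If $d=1$ the $P_i$ share a kernel, hence lie in a common conic plane $L\subseteq\cV_1$, so $\pi=\langle P_1,P_2,P_3\rangle\subseteq L\subseteq\cV_1$, contradicting $\pi\not\subseteq\cV_1$. If $d=3$ then $\bigcap_iT_{P_i}\cV_1=\pi_N$, forcing $\pi=\pi_N\subseteq\cV_1$, the same contradiction. If $d=2$ then $\Sigma:=\bigcap_iT_{P_i}\cV_1$ is a $\mathrm{PG}(3,q)$ containing both the plane $\pi$ and the plane $\pi_N$; these being distinct (since $\pi_N\subseteq\cV_1$ but $\pi\not\subseteq\cV_1$) they meet in a line $\ell_0$, which is defined over $\FF_q$ and, since $\pi_N\subseteq\cV_1$, is contained in $C$ --- so $C$ has an $\FF_q$-rational line and $C(\FF_q)\ne\emptyset$, again a contradiction. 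Hence all $P_i\in\cV$, and $C$ consists of three lines through three points of $\cV$ over $\FF_{q^3}$, as claimed. The main obstacle is exactly this last step: describing the tangent hyperplanes of $\cV_1$ at its smooth points, exploiting the characteristic-two degeneracy of the trace form on symmetric matrices (whose radical is $\pi_N$, which is why all these hyperplanes contain $\pi_N$ and collapse once $d\le 2$), and carrying out the case analysis on $d$.
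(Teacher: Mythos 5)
Your proof is correct and follows essentially the same route as the paper: rule out the geometrically irreducible case by a point-counting bound, reduce to a single Galois orbit of three conjugate lines, and use the characteristic-two fact that every tangent hyperplane of $\cV_1$ contains the nucleus plane $\pi_N$ to force the vertices onto $\cV$. The only real difference is that your closing case analysis on $d=\dim\langle v_1,v_2,v_3\rangle$ is unnecessary: already for a single vertex $P_i$, the planes $\pi$ and $\pi_N$ both lie in the hyperplane $T_{P_i}\cV_1\cong \mathrm{PG}(4,q)$ and hence must meet in a point, which is $\FF_q$-rational since both planes are defined over $\FF_q$ --- which is exactly how the paper concludes.
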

\begin{proof}
Over the algebraic closure of $\FF_q$, $\pi \cap \cV_1$ consists of a curve $\mathcal{C}$ of degree 3. By \cite[Corollary 7.4]{cafure_matera}, if the curve of degree $d$ is absolutely irreducible and $q>\max\{4\cdot d^2,2\cdot d^4\}$, then it has at least one $\FF_q$ rational point. Hence, for $q\geq 2^8$, $\mathcal{C}$ has no $\FF_q$-rational points if and only if it is reducible and the only possibility is that $\mathcal{C}$ consists of three non--concurrent lines over $\FF_{q^3}$, say $\ell,\ell^{\sigma},\ell^{\sigma^2}$, where $\sigma$ is the $\FF_{q}$--linear collineation of the plane induced by $Gal(\FF_{q^3}/\FF_q)$. Let $P$ be $\ell \cap \ell^{\sigma}$, hence the points $P,P^{\sigma},P^{\sigma^2}$ are singular for $\mathcal{C}$. Suppose that $P$ is not a singular point for $\cV_1$, then $\pi$ is contained in the tangent hyperplane of $P$. Since $q$ is even, the tangent hyperplane contains also $\pi_N$, hence $\pi$ and $\pi_N$ intersects in at least one point and since the two planes are defined over $\FF_q$, the point is $\FF_q$-rational. As $\pi_N \subset \cV_1$, we get a an $\FF_q$-rational point of $\pi\cap \cV_1$. Hence $P,P^{\sigma},P^{\sigma^2}$ have to be singular for $\cV_1$, i.e. $P,P^{\sigma}$ and $P^{\sigma^2} \in \cV$ and they are $\FF_{q^3}\setminus \FF_q$-rational.
\end{proof}

\section{Proof of the main result}

An $\FF_{q}$--linear set $\Lambda$ of rank $m$ of PG$(n-1,q^2)$ is a set of points of PG$(n-1,q^2)$ defined by an $m$--dimensional vector space over $\FF_q$. Let $P=(x_0,x_1,\ldots,x_{n-1})\in\Lambda$, then, by definition, $\lambda(x_0,x_1,\ldots,x_{n-1})\in\Lambda$ $\forall \lambda \in \FF_{q}$. If $\lambda(x_0,x_1,\ldots,x_{n-1})\in\Lambda$ $\forall \lambda \in \FF_{q^2}$, then we say that $P$ has \textit{weight 2}, otherwise $P$ is said to be of \textit{weight 1}. If each point of $\Lambda$ has weight 1, then $\Lambda \cong$PG$(m-1,q)$. If $\Lambda$ contains points of weight 2 and $W$ is an $m$--dimensional vector space defining $\Lambda$, then we have $W=W_1\oplus W_2$, with $W_2$ the maximal subspace of $W$ that is a vector space also over $\FF_{q^2}$, $\dim_{\FF_q}W_1=h$, $\dim_{\FF_{q}}W_2=2k$, $h+2k=m$, $\langle \Lambda \rangle$ is a PG$(h+k-1,q^2)$ and $\Lambda$ is a cone with vertex a PG$(k-1,q^2)$ and base a PG$(h-1,q)$.

Throughout this section, we let $q$ be even. If $f_i \in \mathbb{F}[x_0,x_1,\ldots,x_{n-1}]$, $i \in \mathcal{I}$, then let $V(f_i,i\in \mathcal{I}) \subset \mathbb{P}^{n-1}$ be the algebraic variety consisting of the solutions of $f_i=0$ $\forall i \in \mathcal{I}$.

\begin{theorem}
There does not exist an $\mathbb{F}_q$-linear set $\Lambda$ of rank 6 disjoint from $\cV_1$ in PG$(5,q^2)$ for $q \geq 2^{14}$, unless $\Lambda$ is a plane of PG$(5,q^2)$.
\end{theorem}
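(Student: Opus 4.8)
The plan is to pass to the matrix description, dispose of a few degenerate configurations by classical geometry, and reduce the core of the statement to the existence of an $\FF_q$-rational point on a complete intersection of two cubic hypersurfaces. An $\FF_q$-linear set $\Lambda$ of rank $6$ in PG$(5,q^2)$ is given by a $6$-dimensional $\FF_q$-subspace $W$ of the space of symmetric $3\times 3$ matrices over $\FF_{q^2}$, and $\Lambda\cap\cV_1=\emptyset$ says exactly that every nonzero matrix of $W$ is nonsingular. If every point of $\Lambda$ has weight $1$ then $\langle\Lambda\rangle=$PG$(5,q^2)$ and $\Lambda$ is a maximum scattered linear set; otherwise, writing $W=W_1\oplus W_2$ as in the Introduction with $\dim_{\FF_q}W_1=h$, $\dim_{\FF_q}W_2=2k$ and $h+2k=6$, either $h=0$ — so $\Lambda$ is a plane, the case we are allowed — or $(h,k)\in\{(4,1),(2,2)\}$, in which case $\Lambda$ is a cone over a subgeometry PG$(h-1,q)$ with vertex a PG$(k-1,q^2)$, spanning a PG$(4,q^2)$, respectively a PG$(3,q^2)$.

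\textbf{The cone cases.} Take first $(h,k)=(2,2)$, so $\Lambda$ is the union of the $q+1$ planes joining the vertex line $\nu$ to the points of the base subgeometry; assuming $\Lambda\cap\cV_1=\emptyset$, each of these planes and the line $\nu$ itself are disjoint from $\cV_1$. Since $\nu$ meets $\cV_1$ in no $\FF_{q^2}$-point, the cubic cut on $\nu$ by $\cV_1$ is irreducible over $\FF_{q^2}$ and $\nu\cap\cV_1$ is a triple of conjugate points over $\FF_{q^6}$; none of them can lie on $\cV$, since the Veronese surface has no trisecant lines, so such a point $R$ is a smooth point of $\cV_1$. Applying Proposition~\ref{empty} over $\FF_{q^2}$ to each plane of the cone through $\nu$, together with the fact that in even characteristic every tangent hyperplane of $\cV_1$ contains $\pi_N$, one gets that the tangent hyperplane $T_R$ contains both $\langle\Lambda\rangle$ and $\pi_N$; a dimension count then forces $\langle\Lambda\rangle\cap\pi_N$ to contain a line $m$, which lies in $\pi_N\subset\cV_1$ and misses $\nu$ (otherwise $m\cap\nu\subset\Lambda\cap\cV_1$), and projecting $m$ from $\nu$ exhibits a point of $m$ on one of the planes of the cone, hence a point of $\Lambda\cap\cV_1$ — a contradiction. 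For $(h,k)=(4,1)$, a hyperplane $H$ of $\langle\Lambda\rangle$ not through the vertex meets $\Lambda$ in a subgeometry PG$(3,q)$ disjoint from the cubic surface $\cV_1\cap H$, and this is ruled out by the same method as the main case below, in one lower dimension.

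\textbf{The scattered case and the main estimate.} Now $\langle\Lambda\rangle=$PG$(5,q^2)$, so an $\FF_q$-basis of $W$ is also an $\FF_{q^2}$-basis of the symmetric matrices; in the associated $\FF_{q^2}$-coordinates $\Lambda$ is the canonical subgeometry PG$(5,q)$ and $\cV_1$ is $V(F)$, where $F$ is the image of the symmetric determinant under an $\FF_{q^2}$-linear change of coordinates and is therefore an absolutely irreducible cubic over $\FF_{q^2}$. Writing $F=F_1+\theta F_2$ with $\FF_{q^2}=\FF_q(\theta)$ and $F_1,F_2$ cubic forms over $\FF_q$, disjointness of $\Lambda$ from $\cV_1$ amounts to $V(F_1,F_2)$ having no $\FF_q$-rational point. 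If $F_1,F_2$ are proportional over $\FF_q$ then $V(F)$ is an $\FF_q$-rational cubic hypersurface of PG$(5,q^2)$ and Chevalley--Warning (a cubic form in six variables has a nontrivial zero) contradicts disjointness; otherwise a common factor of $F_1,F_2$ would divide $F$, so $\gcd(F_1,F_2)=1$ over $\overline{\FF_q}$ and $X:=V(F_1,F_2)=\cV_1\cap V(F_1)$ is a $3$-dimensional complete intersection of degree at most $9$ defined over $\FF_q$. If $X$ has an absolutely irreducible $\FF_q$-rational component of dimension $3$, an effective Bertini argument cuts it by a generic $\FF_q$-rational PG$(3,q^2)$ into an absolutely irreducible curve of degree at most $9$, and \cite[Corollary~7.4]{cafure_matera} yields an $\FF_q$-rational point — here one uses $q\ge 2^{14}>2\cdot 9^4=\max\{4\cdot 9^2,2\cdot 9^4\}$ — contradicting disjointness.

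\textbf{The main obstacle.} What is left, and where the real work lies, is the degenerate alternative in which $X$ has no absolutely irreducible $\FF_q$-rational component of dimension $3$: over $\overline{\FF_q}$ it is then a union of components permuted within orbits by the Frobenius with no fixed component, and, since $\cV_1$ contains no $3$-dimensional linear subspace (Proposition~\ref{planes}), such components have degree at least $2$. One must go through the possible factorization types of $F_1$ over $\overline{\FF_q}$: a linear or quadric factor of $F_1$ over $\FF_q$ produces an $\FF_q$-rational lower-degree section of $\cV_1$, closed again by Chevalley--Warning or \cite[Corollary~7.4]{cafure_matera}; if $F_1$ is a norm form over $\FF_{q^3}$ the common zero of its three conjugate linear factors is an $\FF_q$-rational plane $\pi^{*}\subset V(F_1)$, so $\cV_1\cap\pi^{*}\subset X$, and either this plane section has an $\FF_q$-point or, by Proposition~\ref{empty}, it is a triangle of three conjugate lines through three points of $\cV$, a configuration one contradicts using the position of $\pi^{*}$ relative to $\cV$ and to $\pi_N$; and when $F_1$ is absolutely irreducible one must show that $\cV_1\cap V(F_1)$ is absolutely irreducible unless $V(F_1)$ is in a very special position with respect to $\cV_1$. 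Classifying the cubic hypersurfaces whose intersection with the symmetric determinantal hypersurface fails to be absolutely irreducible is the delicate point, and I expect it to be the hard part of the whole argument.
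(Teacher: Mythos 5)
Your overall strategy is the paper's: reduce disjointness of $\Lambda$ from $\cV_1$ to the non-existence of $\FF_q$-rational points on the intersection of two cubics over $\FF_q$, kill absolute irreducibility with \cite[Corollary 7.4]{cafure_matera}, kill $\FF_q$-reducibility of a single cubic with Chevalley--Warning, and then confront the remaining degenerate alternative. But that remaining alternative is not a loose end you can defer: it is essentially the entire content of the theorem, and your proposal explicitly stops there (``I expect it to be the hard part of the whole argument''). In the paper, the three-fold $V=W\cup W^{\sigma}\cup\cdots\cup W^{\sigma^{t-1}}$ is analysed through the constraint $d\mu t=9$: the cases where $W$ is a linear $3$-space over $\FF_{q^9}$ or $\FF_{q^3}$ are eliminated (or, for $t=\mu=3$, shown to force $\Lambda$ to be exactly the exceptional plane of Proposition~\ref{empty}) by translating $W$ into a linear set of small rank and using Propositions~\ref{planes} and~\ref{intersection}; the case $d=t=3$, $\mu=1$ with $W$ irreducible splits, via the classification of varieties of minimal degree, into $W=\Sigma_{1,2}$ (ruled out through the nucleus plane and a weight argument, and this is where the plane conclusion reappears) and $W$ a cubic hypersurface of a hyperplane of PG$(5,q^3)$, which forces $\alpha f+\beta g=h^{1+\sigma+\sigma^2}$ and is killed by an explicit trace/Moore-determinant computation ending in $1+a\lambda+\lambda^2=0$, contradicting the irreducibility of $t^2+at+1$. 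None of this appears in your proposal, so the proof is not there.

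Two further points. First, the paper does not need your preliminary split into scattered and cone cases: since $\pi_N\subset\cV_1$, the underlying $\FF_q$-vector space of $\Lambda$ meets the space of zero-diagonal matrices trivially, so $\Lambda=\{(x,y,z,F_1,F_2,F_3)\}$ with $F_i$ $\FF_q$-linear in $x,y,z$ \emph{uniformly}, whatever the weight distribution; this single observation replaces all of your cone arguments, which as written have gaps (for $(h,k)=(2,2)$ you assert without proof that the tangent hyperplane $T_R$ contains $\langle\Lambda\rangle$, and for $(h,k)=(4,1)$ ``the same method in one lower dimension'' is not an argument). Second, no effective Bertini step is needed: \cite[Corollary 7.4]{cafure_matera} is applied in the paper directly to an absolutely irreducible variety of dimension $r\leq 4$ and degree $\leq 9$ with the bound $q>\max\{2(r+1)d^2,2d^4\}$, which is where $q\geq 2^{14}$ comes from.
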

\begin{proof}
The variety $\cV_1$ is a hypersurface of degree 3 consisting of the zeros of $t_0t_1t_2+t_0t_5^2+t_1t_4^2+t_2t_3^2$.
 The nucleus plane $\pi_N:=\{(0,0,0,t_3,t_4,t_5),t_i \in \FF_{q^2},i=3,4,5\}$ is contained in $\cV_1$ so we must have $\Lambda \cap \pi_N=\emptyset$ in $\PG(5,q^2)$ and hence
 $\Lambda= \{(x,y,z,F_1,F_2,F_3),x,y,z \in \FF_{q^2}\}$, with $F_i$ $\FF_q$-linear function of $x,y,z$. For $a \in \FF_q$, let $t^2+a t+1$ be  an irreducible polynomial  over $\FF_q$ and let $\xi$
 be one of its roots in $\FF_{q^2}$. Then $\{1,\xi\}$ is a basis of $\FF_{q^2}$ as $\FF_q$-vector space. Hence we can write $x=x_1+x_2\xi$, $y=y_1+y_2\xi$, $z=z_1+z_2\xi$, with
 $x_i,y_i,z_i \in \FF_q,i=1,2$ and $F_i:\FF_{q^2}^3\longrightarrow \FF_{q^2}$ as $F_1(x_1,x_2,y_1,y_2,z_1,z_2) =l_1+\xi l_2$, $F_2(x_1,x_2,y_1,y_2,z_1,z_2) =m_1+\xi m_2$,
 $F_3(x_1,x_2,y_1,y_2,z_1,z_2) =n_1+\xi n_2$, with $l_i,m_i,n_i$ linear functions of $x_1,x_2,y_1,y_2,z_1,z_2$. In order to avoid confusion, we will denote by $\Sigma$ the PG$(5,q^2)$ where we have defined $\cV_1$ and by $\Sigma(\FF_{q^{2n}})$ the geometry obtained by the field extension of degree $n$ of $\FF_{q^2}$, whereas we will denote by $\Pi$ the PG$(5,q)=\{(x_1,x_2,y_1,y_2,z_1,z_2,l_1,l_2,m_1,m_2,n_1,n_2),x_i,y_i,z_i \in \FF_{q}\}$ and by $\Pi(\FF_{q^n})$ the projective space containing $\Pi$ as subgeometry obtained by the field extension of degree $n$ of $\FF_{q}$. We remark that $\Pi$ induces the linear set $\Lambda$ in $\Sigma$.

A point in $\Lambda \cap \cV_1$ has to fulfill:

\noindent $(x_1+x_2\xi)(y_1+y_2\xi)(z_1+z_2\xi)+(x_1+x_2\xi)(n_1+\xi n_2)^2+(y_1+y_2\xi)(m_1+\xi m_2)^2+(z_1+z_2\xi)(l_1+\xi l_2)^2= (x_1+x_2\xi)(y_1+y_2\xi)(z_1+z_2\xi)+(x_1+x_2\xi)(n_1^2+\xi^2n_2^2)+(y_1+y_2\xi)(m_1^2+\xi^2m_2^2)+(z_1+z_2\xi)(l_1^2+\xi^2l_2^2)=$

\noindent$x_1y_1z_1+\xi(x_1y_1z_2+x_1y_2z_1+x_2y_1z_1)+\xi^2(x_1y_2z_2+x_2y_1z_2+x_2y_2z_1)+\xi^3x_2y_2z_2+x_1n_1^2+y_1m_1^2+z_1l_1^2+i(x_2n_1^2+y_2m_1^2+z_2l_1^2)+\xi^2(x_1n_2^2+y_1m_2^2+z_1l_2^2)+\xi^3(x_2n_2^2+y_2m_2^2+z_2l_2^2)=0.$

As $\xi$ is a root of $t^2+a t +1$, we have $\xi^2=a \xi +1$ and $\xi^3=a \xi^2+\xi= a(a \xi+1)+\xi=(a^2+1)\xi+a$, so:

\noindent$x_1y_1z_1+\xi(x_1y_1z_2+x_1y_2z_1+x_2y_1z_1)+\xi^2(x_1y_2z_2+x_2y_1z_2+x_2y_2z_1)+\xi^3x_2y_2z_2+x_1n_1^2+y_1m_1^2+z_1l_1^2+\xi(x_2n_1^2+y_2m_1^2+z_2l_1^2)+\xi^2(x_1n_2^2+y_1m_2^2+z_1l_2^2)+\xi^3(x_2n_2^2+y_2m_2^2+z_2l_2^2)=$

\noindent $x_1y_1z_1+x_1y_2z_2+x_2y_1z_2+x_2y_2z_1+a x_2y_2z_2+x_1n_1^2+y_1m_1^2+z_1l_1^2+ x_1n_2^2+y_1m_2^2+z_1l_2^2+a (x_2n_2^2+y_2m_2^2+z_2l_2^2)+\xi(x_1y_1z_2+x_1y_2z_1+x_2y_1z_1+x_2n_1^2+y_2m_1^2+z_2l_1^2+a(x_1y_2z_2+x_2y_1z_2+x_2y_2z_1+x_1n_2^2+y_1m_2^2+z_1l_2^2)+(a^2+1)(x_2y_2z_2+x_2n_2^2+y_2m_2^2+z_2l_2^2))=0,$

\noindent implying:

$$\begin{cases}
f:=x_1y_1z_1+x_1y_2z_2+x_2y_1z_2+x_2y_2z_1+a x_2y_2z_2+x_1n_1^2+y_1m_1^2+z_1l_1^2\\+ x_1n_2^2+
y_1m_2^2+z_1l_2^2+a (x_2n_2^2+y_2m_2^2+z_2l_2^2)=0\\
g:=x_1y_1z_2+x_1y_2z_1+x_2y_1z_1+x_2n_1^2+y_2m_1^2+z_2l_1^2+a(x_1y_2z_2+x_2y_1z_2\\+x_2y_2z_1+
x_1n_2^2+y_1m_2^2+z_1l_2^2)+(a^2+1)(x_2y_2z_2+x_2n_2^2+y_2m_2^2+z_2l_2^2)=0
\end{cases}.
$$
\bigskip
\noindent and the only solution for this system of equations must be $x_i,y_i,z_i=0$ $ \forall i=1,2$.

That is equivalent to asking that the algebraic variety $V(f,g)$ defined in PG$(5,q)$ (i.e., $f$ and $g$ polynomials with coefficients in $\FF_q$) should not contain $\mathbb{F}_q$--rational points. Classical results, as the Lang-Weil bound (\cite{LangWeil}), state that when $V$ is an absolutely irreducible variety of dimension $r$ defined by polynomials over the finite field $\FF_q$, for $q$ "big enough", $|V|$ is roughly $q^r$. More precisely, by \cite[Corollary 7.4]{cafure_matera}, if $d$ is the degree of $V$, then for $q> \max\{2(r+1)d^2,2d^4\}$ $V$ has at least one $\FF_q$-rational point. In our case $r\leq4$ and $d\leq 9$, so for $q>2\cdot 9^4$, if $V(f,g)$ is absolutely irreducible, it has at least one point.
Hence, assuming that $q\geq 2^{14}$, in order to get a variety with no $\FF_q$--rational points, we must have a variety $V$ which is reducible over some extension of $\FF_q$, say $\FF_{q^t}$, and such that
$V=W \cup W^{\sigma} \cup \cdots \cup W^{\sigma^{t-1}}$, $W$ a (possible reducible) variety with the same dimension as $V$, $\langle \sigma \rangle =Gal( \FF_{q^t}/\FF_q)$ and $W \cap W^{\sigma} \cap \cdots \cap W^{\sigma^{t-1}}=\emptyset$, where, by abuse of notation, we denote by $\sigma^i$ also the map $(x_0,x_1,\ldots,x_{n-1}) \in \mathrm{PG}(n-1,q^t)\mapsto (x_0^{\sigma^i},x_1^{\sigma^i},\ldots,x_{n-1}^{\sigma^i})\in\mathrm{PG}(n-1,q^t)$, hence $P=(x_0,x_1,\ldots,x_{n-1}) \in \mathrm{PG}(n-1,q)$ if and only if $P^{\sigma}=P$. We remark that $\sigma^i$ induces an automorphism on the projective space, hence $W$ and $W^{\sigma^i}$ have the same degree and dimension.

 Let $V:=V(f,g)$. Suppose that one the two polynomials, say $f$, is reducible over $\FF_q$, hence $f$ has a linear factor $f_1$ over $\FF_q$ and $V(f_1,g) \subseteq V(f,g)$. As $V$ does not contain $\FF_q$--rational points, $f_1$ cannot be a factor of $g$ and hence the variety $V(f_1,g)$ is a cubic hypersurface of PG$(4,q)$. By the Chevalley–Warning theorem, $V(f_1,g)$ has at least one point over $\FF_q$. Hence both $f$ and $g$ are irreducible over $\FF_q$. In the algebraic closure, $\dim V=3$ unless both $V(f)$ and $V(g)$ are reducible varieties over some field extension and they have some common component. The hypersurface
 $V(f)$ is reducible if and only if $f$ is reducible on some field extension and since all the components have the same degree, the only possibility is that $f=f_1f_2f_3$, where $f_i$ is
 linear and defined over $\FF_{q^3}$. The same is true for $V(g)$, hence $g=g_1g_2g_3$ and, in order to get $\dim V=4$, we must have $f_i=g_j$ for some $i$ and $j$, but then $f=g$ and this is
 obviously not the case.

So we may assume that $\dim V=3$ and that $V=W \cup W^{\sigma} \cup \cdots \cup W^{\sigma^{t-1}}$. We have that $\dim W=3$ and let $\deg W$ be $d$ and the multiplicity be $\mu$ (if $W$ is reducible, then $d$ and $\mu$ are the sum of the degrees and multiplicities of the irreducible components of $W$ of dimension $3$), so $d\mu t=\deg (f)\cdot deg (g)=9$ (see \cite[Theorem 18.4]{Harris}). If $t=9$, then $d=1$ and $W$ is a 3--dimensional subspace over $\FF_{q^9}$. Hence $W$ induces a $\FF_{q^9}$--linear set of rank 4 of $\Sigma(\FF_{q^{18}})$. Such a linear set can be a PG$(3,q^9)$, a cone with vertex a point and base a subline PG$(1,q^9)$ (hence contained in a plane of $\Sigma(\FF_{q^{18}})$), or a line of $\Sigma(\FF_{q^{18}})$. In the first case, we would get a PG$(3,q^9)$ contained in $\cV_1$ in $\Sigma(\FF_{q^{18}})$, but this implies that there is a PG$(3,q^{18})$ contained in $\cV_1$, a contradiction to Proposition \ref{planes}. Suppose that $W$ induces a cone. If a plane PG$(2,q^{18})$ shares with $\cV_1$ $q^{9}+1$ lines, then it is contained in $\cV_1$.  Since the $W^{\sigma^i}$ pairwise intersect in at least a line, the linear sets induced by them intersect in a $\FF_{q^9}$--linear set of rank 2. A $\FF_{q^9}$--linear set of rank 2 of PG$(2,q^{18})$ is either a point or a subline PG$(1,q^9)$. As the $W^{\sigma^i}$ are contained in planes that are all conic or all tangent and the planes of the same type pairwise intersect in exactly a point (see, e.g., \cite[Theorems 25.1.11 and 25.1.16]{GGG}), the linear sets induced by the $W^{\sigma^i}$ pairwise intersect in a point of weight 2. As in a cone there is only one point of weight 2, they all intersect in the same point, contradicting $W \cap W^{\sigma} \cap \cdots \cap W^{\sigma^{t-1}}=\emptyset$. So assume that $W$ induces a line. Again, these lines pairwise intersect in a point, but they are not through the same one, hence we get 9 lines in a plane. If a plane shares with $\cV_1$ 9 lines, then it is contained in $\cV_1$. Let $\pi$ be such a plane, then $\Pi(\FF_{q^9})$ induces $\pi$ in $\Sigma(\FF_{q^{18}})$ and so we cannot have that the linear set induced by $\Pi(\FF_{q^9})$ intersects $\cV_1$ in only 9 lines of $\Sigma(\FF_{q^{18}})$. Let $t$ be 3. If $\mu=3$, then $d=1$ and $W$ is a 3--dimensional subspace. Reasoning as before, we get that $\Pi(\FF_{q^3})$ induces a plane $\pi$ of $\Sigma(\FF_{q^3})$ intersecting $\cV_1$ in $\Sigma(\FF_{q^3})$ in three lines not through the same point, hence $\Pi$ induces a subplane PG$(2,q^2)$ disjoint from $\cV_1$ in $\Sigma$, as shown in Proposition \ref{empty}. Let now $\mu=1$.  Hence $d=3$. Suppose that $W$ is reducible, then there must be a component of $W$ of dimension $3$ and degree $1$, getting again the previous case.

Suppose now that $d=3=t$, $\mu=1$ and $W$ is irreducible.

 If $W$ is a variety of PG$(5,q^3)$, then it is a variety of minimal degree (see \cite[Corollary 18.12]{Harris}) and the only possibility is that $W$ is the Segre variety $\Sigma_{1,2}$,
 product of a line and a plane. The variety $\Sigma_{1,2}$ contains two ruling of maximal subspaces, one consisting of lines and the other of planes.
 A plane of $\Pi(\FF_{q^3})$ induces a $\FF_{q^3}$--linear set of rank 3 of $\Sigma(\FF_{q^6})$. Such a linear set is either a subplane PG$(2,q^3)$ or the points a line PG$(1,q^6)$, with only one of them with weight 2. Suppose that one of them, say $\pi$, induces a subplane. If a subplane PG$(2,q^3)$ is contained in $\cV_1$ in $\Sigma(\FF_{q^6})$, then all the PG$(2,q^6)$ containing it is contained in $\cV_1$ and, as $q$ is even, such a plane contains at least a point of $\pi_N$ by Proposition \ref{intersection}. As $\pi_N$ is setwise fixed by the automorphism of $\Sigma(\FF_{q^6})$ induced by $Gal(\FF_{q^6}/\FF_{q^3})$, we have at least an intersection point in the subplane induced by $\pi$. Hence there exists $P\in \pi$ such that $P$ induces a point of $\pi_N$. The subspace $\langle p, P^{\sigma}$, $P^{\sigma^2}\rangle \subset \Pi(\FF_{q^3})$ is setwise fixed by $Gal(\FF_{q^3}/\FF_q)$, hence the subspace $\langle P, P^{\sigma}$, $P^{\sigma^2}\rangle \cap\Pi$ has the same dimension over $\FF_q$ and it has points of $\pi_N \subset \cV_1$. Suppose that all the planes of $\Sigma_{1,2}$ induce lines of $\Pi(\FF_{q^3})$, hence every plane of $\Sigma_{1,2}$ has a line that induces a point of weight 2 in $\Sigma(\FF_{q^6})$. The same is true for the planes of $\Sigma_{1,2}^{\sigma}$
and $\Sigma_{1,2}^{\sigma^2}$. If these lines span $\Pi(\FF_{q^3})$, we have that $\Pi(\FF_{q^3})$ is a plane of $\Sigma(\FF_{q^6})$ and hence $\Pi$ is a plane of $\Sigma$. Suppose that all these lines are contained in the same three-dimensional space, hence there exists a line $L$ of $\Sigma(\FF_{q^6})$ with $3(q^3+1)$ points of $\cV_1$, hence $L\subset \cV_1$. But $L$ is setwise fixed by $Gal(\FF_{q^6}/\FF_{q^2})$, hence there exists a subline of $L$ induced by $\Pi$ contained in $\cV_1$.
The last possibility is that $W$ is a variety of PG$(4,q^3)$, hence a hypersurface of PG$(4,q^3)$ of degree $3$. In this case, either one of $V(f)$ and $V(g)$ contains a hyperplane or there is a
hyperplane $H$ of PG$(5,q^3)$ such that $f\equiv g$ on $H$. In both cases, there exist $\alpha,\beta \in \FF_q$ not both equal to zero such that $\alpha f+\beta g=h^{1+\sigma+\sigma^2}$,
where $h$ is the linear polynomial defining $H$. Let $f=x_1y_1z_1+x_1y_2z_2+x_2y_1z_2+x_2y_2z_1+a x_2y_2z_2+f^*$ and
$g=x_1y_1z_2+x_1y_2z_1+x_2y_1z_1+a(x_1y_2z_2+x_2y_1z_2+x_2y_2z_1)+(a^2+1)x_2y_2z_2+g^*$, so $f^*$ and $g^*$ contain monomials with at least one variable raised to the second power. Any linear
combination of $f$ and $g$ cannot contain the monomials $x_1x_2y_1,x_1x_2y_2,x_1x_2z_1,x_1x_2z_2$.  Let $h=a_1x_1+b_1y_1+c_1z_1+a_2x_2+b_2y_2+c_2z_2$, then the coefficient of $x_1x_2y_1$
is $Tr(a_1a_2^qb_1^{q^2}+a_1a_2^{q^2}b_1^{q})$, where $Tr:\FF_{q^3}\rightarrow\FF_q$ is the usual trace function.  We have that, in even characteristic,
$Tr(a_1a_2^qb_1^{q^2}+a_1a_2^{q^2}b_1^{q})=\left|
\begin{array}{ccc}
a_1 & a_2 & b_1 \\
a_1^q & a_2^q & b_1^q \\
a_1^{q^2}& a_2^{q^2} & b_1^{q^2} \\
\end{array}
\right|$ and, by \cite[Lemma 3.51]{finitefields}, this matrix is singular if and only if $\{a_1,a_2,b_1\}$ is dependent over $\FF_q$. Analogously, by the lack of $x_1x_2y_2,x_1x_2z_1,x_1x_2z_2$, we get that also $\{a_1,a_2,b_2\}$, $\{a_1,a_2,c_1\}$ and $\{a_1,a_2,c_2\}$ are dependent over $\FF_q$. So either $b_1,b_2,c_1,c_2 \in \langle a_1,a_2 \rangle_{\FF_q}$ or $\{a_1,a_2\}$ is dependent over $\FF_q$. In the first case, we would have $\dim_{\FF_q} \langle a_1,a_2,b_1,b_2,c_1,c_2 \rangle=2$, but this is not possible. In fact, any linear combination $\alpha f + \beta g$ with $(\alpha,\beta)\neq (0,0)$ contains at least one monomial of type $x_iy_jz_h$, as $f+f^*$ contains monomials not contained in $g+g^*$ and vice versa, and this implies that $\dim_{\FF_q}\langle a_i,b_j,c_h\rangle=3$, a contradiction to $\dim_{\FF_q} \langle a_1,a_2,b_1,b_2,c_1,c_2\rangle =2$. So $\{a_1,a_2\}$ is dependent over $\FF_q$. Analogously, the lack of $y_1y_2x_1,y_1y_2x_2,y_1y_2z_1,y_1y_2z_2$ implies that $\{b_1,b_2\}$ is dependent over $\FF_q$ and the lack of $z_1z_2x_1,z_1z_2x_2,z_1z_2y_1,z_1z_2y_2$ implies that $\{c_1,c_2\}$ is dependent over $\FF_q$. So $a_2=\lambda a_1$, $b_2=\mu b_1$ and $c_2=\nu c_1$, with $\lambda,\mu,\nu \in \FF_q$, and $T:=Tr(a_1b_1^qc_1^{q^2}+a_1b_1^{q^2}c_1^{q})\neq 0$. The coefficient of $x_1y_1z_1$ in $\alpha f +\beta g$ is $\alpha$, while it is $T$ in $h^{1+\sigma+\sigma^2}$, hence $\alpha=T$. The coefficient of $x_1y_1z_2,x_1y_2z_1$ and $x_2y_1z_1$ in $\alpha f +\beta g$ is $\beta$, while they are, respectively, $\lambda T,\mu T, \nu T$ in $h^{1+\sigma+\sigma^2}$, hence $\lambda=\mu=\nu$ and $\lambda T=\beta$. Finally, the coefficient of $x_1y_2z_2$ is $\alpha + a \beta$ in $\alpha f +\beta g$ and it is $\mu\nu T=\lambda^2 T$ in $h^{1+\sigma+\sigma^2}$, so we get $1+a\lambda +\lambda^2=0$, but $t^2+at+1$ is irreducible over $\FF_q$ by hypothesis.
\end{proof}

\begin{theorem}
Let $\hat{G}$ be the group of collineations fixing $\cV$ (and hence $\cV_1$). The planes disjoint from $\cV_1$ form a unique orbit under the action of $\hat{G}$, and hence such a orbit consists of the planes inducing the Desarguesian spread.
\end{theorem}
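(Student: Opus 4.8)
The plan is to combine the previous theorem, which shows that any $\FF_q$-linear set of rank $6$ of $\Sigma=\mathrm{PG}(5,q^2)$ disjoint from $\cV_1$ must be a \emph{plane} of $\Sigma$, with a direct classification of the planes of $\Sigma$ that are disjoint from $\cV_1$. By Proposition \ref{empty} applied over $\FF_{q^2}$ (the field of definition of $\Sigma$), a plane $\pi$ of $\Sigma$ is disjoint from $\cV_1$ if and only if $\pi\cap\cV_1$, computed over $\overline{\FF_{q}}$, is a cubic curve splitting as three non-concurrent lines $\ell,\ell^\sigma,\ell^{\sigma^2}$ with $\sigma$ generating $\mathrm{Gal}(\FF_{q^6}/\FF_{q^2})$, whose three pairwise intersection points $P=\ell\cap\ell^\sigma$, $P^\sigma$, $P^{\sigma^2}$ lie on the Veronese surface $\cV$ and are $\FF_{q^6}\setminus\FF_{q^2}$-rational. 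So the content of the final theorem is twofold: first, that such planes exist (so the orbit is nonempty) and all arise this way; second, that $\hat G=\PgL(3,\FF_{q^2})$ acts transitively on them, and that one (hence every) such plane induces the Desarguesian spread.

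First I would establish transitivity. A plane $\pi$ disjoint from $\cV_1$ is determined by the unordered triple of points $\{P,P^\sigma,P^{\sigma^2}\}\subset\cV(\FF_{q^6})$: indeed $\pi=\langle P,P^\sigma,P^{\sigma^2}\rangle$, since these three points are not collinear (the lines $\ell,\ell^\sigma,\ell^{\sigma^2}$ are non-concurrent, so $P,P^\sigma,P^{\sigma^2}$ are distinct and affinely independent). Pulling back through the Veronese map $v_2$, the triple corresponds to a triple $\{Q,Q^\sigma,Q^{\sigma^2}\}$ of points of $\mathbb P^2(\FF_{q^6})$ with $Q$ lying in no $\FF_{q^2}$-subline — equivalently, a $\mathrm{Gal}(\FF_{q^6}/\FF_{q^2})$-orbit of length $3$ of points of $\mathbb P^2(\FF_{q^6})$ spanning the plane, i.e. a \emph{subplane} $\mathrm{PG}(2,q^2)$-conjugate triple, or what is the same, a point of $\mathbb P^2$ over the cubic extension in "general position". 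The group $\PGL(3,\FF_{q^2})$ acts on $\mathbb P^2(\FF_{q^6})$ commuting with $\sigma$, and it is classical that $\PGL(3,\FF_{q^2})$ is transitive on the triples $\{Q,Q^\sigma,Q^{\sigma^2}\}$ of conjugate points generating the plane (these are exactly the frames fixed by no non-trivial semilinear map, equivalently the $\FF_{q^6}$-points generating a Baer-type subline configuration; a clean way to see transitivity is that such a triple is projectively equivalent to $\{(1,\theta,\theta^2),(1,\theta^q,\theta^{2q}),(1,\theta^{q^2},\theta^{2q^2})\}$ for a generator $\theta$ of $\FF_{q^6}$ over $\FF_{q^2}$, and any two generators are related by an element of $\PGL(3,\FF_{q^2})$ because the companion matrices of their minimal polynomials are conjugate). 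Lifting through $v_2$ to $\hat G$ gives transitivity of $\hat G$ on the planes disjoint from $\cV_1$.

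Next I would identify the spread: the two known examples — the Desarguesian spread and its cousin — together with the correspondence of \cite{lavrauw2011} give two orbits of ``$\FF_q$-linear sets of rank $6$ of $\mathrm{PG}(5,q^2)$ disjoint from $\cV_1$'' up to the full stabilizer; but only the Desarguesian one is an honest \emph{plane} of $\Sigma$ (for the cousin the maximum field of linearity is $\FF_2$, so the linear set is far from being a subspace over $\FF_q$). Since the previous theorem forces $\Lambda$ to be a plane of $\Sigma$, $\Lambda$ cannot be the cousin, hence it is the Desarguesian one; alternatively, one checks directly that the explicit Desarguesian linear set $\{(x,y,z,x^{q},y^{q},z^{q}):x,y,z\in\FF_{q^2}\}$ — which corresponds to the symmetric spread set given by the field $\FF_{q^2}$ viewed inside $M(3,\FF_q)$ — is a plane of $\Sigma$ and its intersection with $\cV_1$ is exactly three lines meeting pairwise in the conjugate triple $v_2(1,\theta,\theta^2)$ type described above. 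Transitivity of $\hat G$ then shows every plane disjoint from $\cV_1$ is $\hat G$-equivalent to this one, so they all induce the Desarguesian spread, proving the theorem; combined with the previous theorem this yields the Main Result.

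I expect the main obstacle to be the transitivity statement: one must argue carefully that $\PGL(3,\FF_{q^2})$ is transitive on the relevant $\sigma$-orbits, ruling out a priori more than one orbit (there could be a subtlety if some conjugate triples span only a line, but those are precisely excluded by the ``three non-concurrent lines'' conclusion of Proposition \ref{empty}, which forces the triple to span the plane). The cleanest route is the companion-matrix argument: the $\FF_{q^2}$-span of $\{(1,\theta,\theta^2)^{\sigma^i}\}_i$ is all of $\FF_{q^6}$ precisely when $\theta$ generates $\FF_{q^6}$ over $\FF_{q^2}$, and two such generators have $\PGL(3,\FF_{q^2})$-conjugate orbits because their minimal polynomials are degree-$3$ irreducibles over $\FF_{q^2}$ whose companion matrices are $\PGL$-conjugate iff the polynomials coincide — but changing $\theta$ by an $\FF_{q^2}$-linear automorphism of $\FF_{q^6}$ realises any target minimal polynomial, and such automorphisms are induced by $\PGL(3,\FF_{q^2})$ acting on the regular representation. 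Once this is in hand, the rest is bookkeeping with the two known examples.
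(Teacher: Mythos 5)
Your proposal follows essentially the same route as the paper: reduce via Proposition \ref{empty} to the conjugate triple of Veronese points, pull back through $v_2$ to a $\mathrm{Gal}(\FF_{q^6}/\FF_{q^2})$-orbit spanning $\PG(2,q^6)$, identify such triples with $\FF_{q^2}$-bases of $\FF_{q^6}$ on which $\PGL(3,q^2)$ acts transitively (commuting with $\sigma$), lift to $\hat G$, and note the orbit contains the Desarguesian plane. The only wobble is inessential: your ``explicit'' set $\{(x,y,z,x^q,y^q,z^q)\}$ is only $\FF_q$-linear, not a plane of $\Sigma$, but your primary identification of the orbit with the Desarguesian spread matches the paper's (equally brief) conclusion.
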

By Proposition \ref{empty}, a plane $\pi$ is disjoint from $\cV_1$ if and only if $\pi\cap \cV_1$ consists of three nonconcurrent lines through three points $P,P^{\sigma},P^{\sigma^2}$ of $\cV$ over $\FF_{q^3}$, with $\sigma$ the $\FF_q$--linear collineation induced by $Gal(\FF_{q^6}/\FF_{q^2})$. We have that $P^{\sigma^i}=v_2(R^{\sigma^i})$, $R \in$PG$(2,q^6)$, $i=0,1,2$, where by abuse of notation we have denoted by $\sigma$ also the $\FF_{q^2}$--linear collineation of PG$(2,q^6)$ induced by $Gal(\FF_{q^6}/\FF_{q^2})$. If the points $R,R^{\sigma},R^{\sigma^2}$ were collinear, then $\pi$ would contain the image of the line through them, i.e. $\pi$ would contain a conic of $\cV$ and hence $\pi\subset \cV_1$. Hence $R,R^{\sigma},R^{\sigma^2}$ are not collinear. Let $R=(x,y,z)\in$PG$(2,q^6)$, then $R,R^{\sigma},R^{\sigma^2}$ are not collinear if and only if $\left(
                  \begin{array}{ccc}
                    x & y & z \\
                    x^{q^2} & y^{q^2} & z^{q^2} \\
                    x^{q^4} & y^{q^4} & z^{q^4} \\
                  \end{array}
                \right)
$ is nonsingular and by \cite[Lemma 3.51]{finitefields} this is equivalent to having $\{x,y,z\}$ independent over $\FF_{q^2}$.
 Let $R'=(x',y',z')\in$PG$(2,q^6)$ be another point such that $R',R'^{\sigma},R'^{\sigma^2}$ are not collinear, hence $\{x,y,z\}$ and $\{x',y',z'\}$ are two bases of $\FF_{q^6}$ considered as $\FF_{q^2}$-vector space, so there exists an element $g\in G=PGL(3,q^2)$ such that $\{x,y,z\}^g=\{x',y',z'\}$. As $g$ and $\sigma$ commute, $\forall g \in G$, we have that $G$ is transitive on the sets $\{R,R^{\sigma},R^{\sigma^2}\}$ with $R\in$PG$(2,q^6)$ such that $\{R,R^{\sigma},R^{\sigma^2}\}$ is not contained in a line. This implies that the lifting of $G$, say $\hat{G}$, fixing $\cV$ acts transitively on the planes $\pi$ of PG$(5,q^2)$ disjoint from $\cV_1$. This unique orbit hence contains the plane inducing the Desarguesian spread.

\end{document}